\chardef\bslash=`\\ 
\def\verbatim{\interlinepenalty\@M \@verbatim
  \leftskip\@totalleftmargin\advance\leftskip2pc
  \frenchspacing\@vobeyspaces \@xverbatim}
\newtheorem{thm}{Theorem}[section]
\newtheorem{lem}[thm]{Lemma}
\newtheorem{defin}[thm]{Definition}
\newtheorem{que}[thm]{Question}
\begin{document}


\title
{Injections into Function Spaces over Compacta}
\author{Raushan ~Z. ~Buzyakova}
\address{Department of Mathematics and Statistics,
The University of North Carolina at Greensboro,
Greensboro, NC, 27402, USA}
\email{Raushan\_Buzyakova@yahoo.com}
\keywords{pointwise convergence, injection, ordinal, linearly ordered topological space, group isomorphism}
\subjclass{54C35, 54C10}


\begin{abstract}{
We study the topology of $X$ given that $C_p(X)$ injects into
$C_p(Y)$, where $Y$ is compact. We first show that if $C_p$ over a GO-space injects into $C_p$
over a compactum, then the Dedekind remainder of the GO-space is hereditarily paracompact. Also, for each ordinal $\tau$ of
uncountable cofinality, we  construct
a continuous bijection of $C_p(\tau, \{0,1\})$ onto a subgroup of $C_p(\tau+1, \{0,1\})$, which is in addition a group
isomorphism.
}
\end{abstract}

\maketitle
\markboth{Raushan Z.Buzyakova}{Injections into Function Spaces over Compacta}
{ }

\section{Introduction}\label{S:intro}

\par\bigskip
In this paper we continue exploring  connections between $X$ and $Y$ given that $C_p(X)$ admits a continuous injection into $C_p(Y)$.
We first observe (Theorem \ref{thm:maincriterion}) that if a GO-space $X$ (a subspace of a linearly ordered topological space) admits a continuous injection
into $C_p(Y)$, where $Y$ is compact, then its Dedekind remainder is hereditarily paracompact. In other words, the Dedekind remainder of $X$ does not contain a subspace homeomorphic to a stationary subspace
of an uncountable regular ordinal. This observation has the same flavor as an earlier result of the author 
\cite[Theorem 2.6]{BUZ3}, that if $\tau$ is an ordinal and $X$ is a subspace of an ordinal
such that $C_p(X, \{0,1\})$ admits a continuous injection into $C_p(\tau, \{0,1\})$ then $\overline X\setminus X$ is hereditarily
paracompact. The proof of this earlier statement is rather technical and therefore, it is natural  
to ask if one can derive the earlier statement from our new one.
Clearly, neither is  a generalization of the other. To make our new statement usable for derivation of  the earlier one we prove that
$C_p(\tau, \{0,1\})$ admits a continuous isomorphism onto a subgroup of $C_p(\tau +1, \{0,1\})$ for any ordinal $\tau$ of uncountable cofinality (Theorem \ref{thm:isomorphism}). Note
that there is no  continuous surjection of $C_p(\omega_1, \{0,1\})$ onto   $C_p(\omega_1+1, \{0,1\} )$ since the former is Lindel$\ddot {\rm o}$f
and the latter is not. Also, $C_p(\omega_1, \{0,1\})$ is not homeomorphic to any subspace of  $C_p(\omega_1+1)$ since the latter has countable
tightness and the former does not. Given these observations, the map we construct, even though quite natural, may  seem    unexpected. 

In \cite[Theorem 2.9]{BUZ2}, the author showed that if $M$ is a metric space and $A\subset \omega_1$ then the existence of an injection of 
$C_p(A, M^\omega)$ into $C_p(\omega_1, M^\omega)$ is equivalent to the existence of an embedding of $C_p(A, M^\omega)$
into $C_p(\omega_1, M^\omega)$. The injective map of $C_p(\omega_1,\{0,1\})$ into $C_p(\omega_1+1, \{0,1\})$ that we construct 
shows that this  statement cannot be extended beyond $\omega_1$. Namely, it is no longer true even for $\omega_1+1$.

In notation and terminology of general topological nature we will follow \cite{ENG}. For basic facts on $C_p(X)$
we refer to \cite{ARH}. For basic facts about ordinals, we refer to \cite{KUN}. Ordinals
are endowed with the topology of linear order and their subsets with the subspace topology.

\par\bigskip
\section{Study}\label{S:study}

\par\bigskip
For convenience we next give a description of the classical construction, Dedekind completion.
Even though it can be found in many classical textbooks, we copy it from the author's earlier work \cite{BUZ1}
since we will use it together with a statement proved in that work.
\par\bigskip\noindent
{\bf Dedekind Completion.} 
An ordered pair $\langle A, B\rangle$ of 
disjoint closed subsets
of $L$ is called a {\it Dedekind section} if
$A\cup B = L$, $\max A$ or $\min B$ does not
exists, and $A$ is to the left of $B$.
A pair $\langle L, \emptyset\rangle$ ($\langle \emptyset, L\rangle$)
is also a Dedekind section if $\max L$ ($\min L$) does not exist. 
The {\it Dedekind completion} of $L$, 
denoted by $cL$, is constructed as follows.
The set $cL$ is the union of $L$ and the set of all Dedekind
sections of $L$. The order on $cL$ is natural.
The order on elements of $L$ is not changed. 
If $x\in L$ and $y=\langle A, B\rangle\in cL\setminus L$
then $x$ is less (greater) than $y$ if $x\in A$ ($B$).
If $x=\langle A_1,B_1\rangle$ and $y=\langle A_2,B_2\rangle$ 
are  elements of  $cL\setminus L$, then $x$ is less than $y$ if 
$A_1$ is a proper subset of $A_2$.

\par\bigskip
The mentioned statement of our interest follows and will be later used to prove one of our results.

\par\bigskip\noindent
\begin{thm}\label{thm:remaindertoCp} {\rm (\cite[Corollary 3.5]{BUZ1}))}
Let $L$ be a GO-space. 
Then 
the Dedekind remainder $cL\setminus L$
of $L$ is homeomorphic to a closed subspace
of $C_p(L,\{0,1\})$.
\end{thm}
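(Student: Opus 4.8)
The plan is to realize each gap as the characteristic function of its right‑hand part and to check that the resulting map is a closed embedding. For $p\in cL\setminus L$ write $p=\langle A_p,B_p\rangle$; since $A_p$ and $B_p$ are disjoint closed sets covering $L$, they are complementary clopen subsets of $L$, so the characteristic function $f_p:=\chi_{B_p}$ lies in $C_p(L,\{0,1\})$. Define $\Phi\colon cL\setminus L\to C_p(L,\{0,1\})$ by $\Phi(p)=f_p$. Three things must be verified: $\Phi$ is injective, $\Phi$ is an embedding, and $\Phi(cL\setminus L)$ is closed.

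Injectivity is immediate from the order on $cL$: if $p<q$ then $A_p\subsetneq A_q$, and any $x$ in the nonempty set $A_q\setminus A_p\subseteq L$ gives $f_p(x)=1\neq 0=f_q(x)$. For continuity of $\Phi$ at $p$, take a basic neighbourhood $W=\{h:h(x_i)=f_p(x_i),\ i\le n\}$ of $f_p$ with $x_i\in L$, set $a=\max\{x_i:x_i\in A_p\}$ and $b=\min\{x_i:x_i\in B_p\}$ where these are defined, and note $a<p<b$ in $cL$; the trace on $cL\setminus L$ of the order‑convex set with endpoints $a$ and $b$ (a ray, or all of $cL\setminus L$, if one or both of $a,b$ is missing) is a neighbourhood $U$ of $p$, and for $q\in U$ each $x_i\in A_p$ lies in $A_q$ and each $x_i\in B_p$ lies in $B_q$, so $\Phi(U)\subseteq W$.

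For the reverse I would show $\Phi$ is open onto its image. Given $p$ and a basic neighbourhood $U$ of $p$, i.e.\ the trace of an order‑convex set $(c,d)_{cL}$ (or a ray, or — for the at most two extreme sections $\langle L,\emptyset\rangle$, $\langle\emptyset,L\rangle$ — a ray treated separately), I want $x,y\in L$ with
\[
\{h:h(x)=f_p(x),\ h(y)=f_p(y)\}\cap\Phi(cL\setminus L)\subseteq\Phi(U).
\]
If $\max A_p$ exists, take $x=\max A_p$, using that (since no two gaps are adjacent and no point of $L$ lies strictly between $\max A_p$ and $p$) necessarily $c\le\max A_p<p$; if $\max A_p$ does not exist, then $(c,p)_{cL}$ must contain a point of $L$, which one takes for $x$. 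Either way $f_q(x)=f_p(x)$ forces $q>c$; the choice of $y$ on the right is symmetric, via $\min B_p$ or a point of $L$ in $(p,d)_{cL}$. Combining the two coordinates confines $q$ to $U$, so $\Phi(U)$ is relatively open and $\Phi$ is a homeomorphism onto its image.

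It remains to show $\mathcal F:=\Phi(cL\setminus L)$ is closed, i.e.\ every $g\in C_p(L,\{0,1\})\setminus\mathcal F$ has a basic neighbourhood missing $\mathcal F$. If $g$ is not monotone, pick $x<y$ in $L$ with $g(x)=1$, $g(y)=0$; then $\{h:h(x)=1,\ h(y)=0\}$ avoids $\mathcal F$, since any $f_p$ in it would force $x>p>y$. If $g$ is monotone, then $\langle g^{-1}(0),g^{-1}(1)\rangle$ fails to be a Dedekind section in exactly three ways: $g\equiv 0$ with $\max L$ existing, $g\equiv 1$ with $\min L$ existing, or $g$ jumping from $0$ to $1$ across a pair of consecutive points $a<b$ of $L$; evaluating respectively at $\max L$, at $\min L$, or at $\{a,b\}$ produces the required neighbourhood. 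I expect the main difficulty to be bookkeeping rather than ideas: the openness argument and the monotone case of the closedness argument both turn on correctly handling gaps that abut a point of $L$ together with the two extreme sections, and that is where a careless proof would break.
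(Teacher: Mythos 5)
Your proof is correct, and the map $p\mapsto\chi_{B_p}$ is the natural realization of the Dedekind remainder inside $C_p(L,\{0,1\})$; note that the paper itself offers no proof of this statement, quoting it as \cite[Corollary 3.5]{BUZ1}, so your argument in effect supplies the expected one. The delicate points you flagged --- no two gaps are adjacent, one-sided gaps where exactly one of $\max A_p$, $\min B_p$ exists, the two extreme sections, and the three ways a monotone $g$ can fail to come from a section --- are all handled correctly, so nothing further is needed.
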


\par\bigskip
Before we proceed, let us remind two concepts and a known theorem of $C_p$-theory. Given a space $X$, by $e(X)$ we denote the supremum of cardinalities
of closed discrete subspaces of $X$. By $l(X)$ we denote the smallest cardinal number such that every open cover of $X$
contains a  subcover of size at most $l(X)$. It is a well-known theorem of Baturov \cite{BAT} that $e(Z)=l(Z)$ for any subspace $Z$ of $C_p(X)$, where
$X$ is a Lindel${\rm \ddot o}$f $\Sigma$-space (a complete proof can also be found in \cite[Theorem III.6.1]{ARH}). 
We will use the fact that every Lindel${\rm \ddot o}$f locally compact space is a Lindel${\rm \ddot o}$f $\Sigma$-space 
(see \cite{NAG}). The definition of 
a Lindel${\rm \ddot o}$f $\Sigma$-space is irrelevant for our discussion, and is therefore, omitted.
\par\bigskip\noindent
\begin{lem}\label{lem:onetooneimageofstationary}
Let $S$ be a stationary subset of a regular uncountable ordinal $\kappa$. Suppose that $f:S\to X$ is a continuous
injection. Then there exists a subspace $Z\subset X$ such that $e(Z)\not = l(Z)$.
\end{lem}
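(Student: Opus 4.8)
The plan is to exploit the classical fact that a continuous injection from a stationary subset $S$ of $\kappa$ into a space $X$ must, on a smaller stationary set, be highly non-trivial in a way that forces a ``gap'' between extent and Lindel\"of number downstream. First I would invoke the pressing-down (Fodor) lemma in the following form: since $f:S\to X$ is continuous and $\kappa$ has uncountable cofinality, every point $s\in S$ that is a limit of $S$ has a neighborhood of the form $(\alpha(s),s]$ on which $f$ is ``close'' to $f(s)$; more precisely, for each such $s$ we can choose $\alpha(s)<s$ together with a basic open set $U_s\ni f(s)$ with $f((\alpha(s),s]\cap S)\subset U_s$. Applying Fodor's lemma to $s\mapsto \alpha(s)$ on the stationary set of limit points of $S$, we obtain a stationary $S'\subset S$ and a fixed $\alpha_0$ with $\alpha(s)=\alpha_0$ for all $s\in S'$.

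The next step is to separate the images. Because $f$ is injective, the points $\{f(s):s\in S'\}$ are distinct. I would then argue that $Z:=f(S')$ (or a suitable subspace of it) already witnesses $e(Z)\neq l(Z)$. The key observation is that $S'$, being a stationary subset of $\kappa$, is not Lindel\"of — indeed $l(S')=\kappa$ — while the closed discrete subsets of $S'$ are at most countable (any uncountable subset of an ordinal of regular uncountable cofinality, being stationary or at least unbounded in some initial segment, accumulates), so $e(S')=\omega<\kappa=l(S')$. Since $f\restriction S'$ is a continuous \emph{bijection} onto $Z$, we have $l(Z)\le l(S')$; the real content is to push this the other way, or rather to transfer the failure of the Lindel\"of/extent equality. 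Here I would use that a continuous bijective image of a stationary set retains enough structure: I would show that $Z$ contains a closed copy of a stationary set — or failing that, directly show $e(Z)$ is countable while $l(Z)$ is uncountable by checking that any open cover of $Z$ pulls back to an open cover of $S'$, hence has no countable subcover, so $l(Z)\ge\kappa$, whereas a closed discrete subset of $Z$ pulls back to a closed (by continuity, preimages of closed discrete sets under a continuous bijection are closed) discrete subset of $S'$, which is countable.

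The main obstacle I anticipate is the asymmetry of continuous bijections: $f^{-1}$ need not be continuous, so while $l(Z)\ge l(S')$ is automatic, showing that closed discrete subsets of $Z$ stay countable requires that preimages of closed sets be closed (true, $f$ is continuous) \emph{and} that preimages of discrete sets be discrete (this needs injectivity, which we have). So in fact this direction is clean, and the real subtlety is only making sure $e(S')=\omega$: this is where I'd carefully use that $S'$ is a subspace of the ordinal $\kappa$ and that regular uncountable ordinals have the property that every uncountable subset has an accumulation point within any cofinal-in-$\kappa$ portion — equivalently, closed discrete subsets of subspaces of $\kappa$ are of size less than $\mathrm{cf}(\kappa)=\kappa$, hence (for $\kappa$ regular) every closed discrete subset of $S'$ is bounded in $\kappa$, and a bounded subset of $\kappa$ is a subset of a countable... no: it is a subset of an ordinal $<\kappa$, which need not be countable. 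So the sharper statement I need is: a closed discrete subspace of a subspace of an ordinal has cardinality strictly less than the cofinality of that ordinal only at the ``top''; more carefully, closed discrete subsets of $S'$ can be large, so rather than claiming $e(S')=\omega$ I would claim $e(S')<l(S')$ by noting $e(S')\le$ the least cardinal $\lambda$ such that $S'$ has a closed discrete subset cofinal in $\kappa$, which is impossible for stationary $S'$ (a closed unbounded-in-$\kappa$ discrete set would contradict stationarity via intersecting the club of limit points), so every closed discrete subset of $S'$ is bounded, hence $e(S')\le\sup\{|S'\cap\alpha|:\alpha<\kappa\}<\kappa=l(S')$. Transferring both quantities across $f$ as above then yields $e(Z)<l(Z)$, and in particular $e(Z)\neq l(Z)$, completing the argument.
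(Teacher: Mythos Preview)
Your argument has a genuine gap at the crucial step: you never establish that $l(Z)\geq\kappa$. You yourself note that a continuous surjection can only \emph{lower} the Lindel\"of number, so $l(Z)\le l(S')$ is automatic; but then your attempt to ``push this the other way'' by saying that an open cover of $Z$ pulls back to an open cover of $S'$ with no small subcover does not work. The equality $l(S')=\kappa$ only says that \emph{some} open cover of $S'$ (for instance the cover by bounded initial segments) lacks a subcover of size ${<}\kappa$; it says nothing about the particular covers obtained by pulling back covers of $Z$. Since those initial-segment covers need not be of the form $\{f^{-1}(U):U\in\mathcal U\}$ for any open cover $\mathcal U$ of $Z$, the argument stalls exactly here. (Your extent transfer, $e(Z)\le e(S')$, is fine; it is the Lindel\"of direction that fails.) Incidentally, the Fodor step is never used: after pressing down you discard $\alpha_0$ and the $U_s$'s and argue only from the stationarity of $S'$, which you already had for $S$.

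The paper supplies precisely the missing idea: rather than trying to push the Lindel\"of number across $f$, it locates a specific point in a compactification of the image whose deletion forces $l$ to be large. Concretely, one argues that $S$ has a unique complete accumulation point in $\beta S$, call it $\kappa$, and extends $f$ to $\tilde f:\beta S\to\beta X$. Continuity then gives that every neighbourhood of $\tilde f(\kappa)$ in $\beta X$ contains all but fewer than $\kappa$ points of $f(S)$; equivalently, the pseudocharacter of $\tilde f(\kappa)$ in $f(S)\cup\{\tilde f(\kappa)\}$ is at least $\kappa$. Setting $Z=f(S)\setminus\{\tilde f(\kappa)\}$ therefore yields $l(Z)\ge\kappa$ directly, via the explicit cover by complements of neighbourhoods of $\tilde f(\kappa)$, while the extent bound (which you handled correctly) gives $e(Z)<\kappa$.
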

\begin{proof}
First, observe that the extent of $S$ is less than $\kappa$. Indeed, let $D$ be a $\kappa$-sized subset of
$S$ which is discrete in itself. Denote by $D'$ the derived set of $D$ in $\kappa$. That is, $D' = cl_{\kappa}(D)\setminus D$.
The set $D'$ is a closed unbounded subset of $\kappa$. Since $S$ is a stationary subset of $\kappa$,
$S$ meets $D'$. That is, $D$ is not closed in $S$. Therefore, any closed discrete subset of $S$ has cardinality less than $\kappa$.  Since $\kappa$ is uncountable and regular, we conclude that $e(S)<\kappa$. 

Next, observe that $S\setminus \{s\}$  is also stationary, and therefore, has extent strictly less than $\kappa$
for any $s\in S$. By continuity of $f$, the inequalities  $e(f(S))<\kappa$ and $e(f(S)\setminus \{x\})<\kappa$ hold for any $x\in X$.
Thus, to prove our lemma it suffices to show
that either $f(S)$ or $f(S)\setminus \{x\}$ for some $x\in X$ has Lindel\"of number at least $\kappa$.

Since $S$ has only one complete accumulation point in $\beta S$, we may assume that this point is $\kappa$.
Let $\tilde f: \beta S\to \beta X$ be the continuous extension to the \v Cech-Stone compactifications.
Since $\kappa$ is the only complete accumulation point of $S$ and $\tilde f$ is continuous, we conclude that 
$\tilde f(S)\setminus U$ is of cardinality strictly less than $\kappa$ for any neighborhood $U$ of $\tilde f(\kappa)$.
Since $|f(S)| = \kappa$, we conclude that the pseudocharacter of $\tilde f(\kappa)$ in $f(S)\cup \{ \tilde f(\kappa)\}$
is at least $\kappa$. Thus, $Z = f(S) \setminus \{\tilde f(\kappa)\}$ has Lindel\"of number at least kappa and extent strictly 
less than $\kappa$. Hence $Z$ is as desired.
\end{proof}

\par\bigskip\noindent
\begin{lem}\label{lem:nostationaryinCpcompact}
No stationary subset of an uncountable regular ordinal
admits a continuous injection into
$C_p(X)$, where $X$ is a Lindel${\ddot o}$f $\Sigma$-space.
\end{lem}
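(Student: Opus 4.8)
The plan is to derive this immediately from Lemma~\ref{lem:onetooneimageofstationary} together with Baturov's theorem. Suppose, toward a contradiction, that $S$ is a stationary subset of a regular uncountable ordinal $\kappa$ and that $f\colon S\to C_p(X)$ is a continuous injection, where $X$ is a Lindel\"of $\Sigma$-space.

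First I would invoke Lemma~\ref{lem:onetooneimageofstationary} with the target space taken to be $C_p(X)$. That lemma produces a subspace $Z\subset C_p(X)$ with $e(Z)\neq l(Z)$. Next I would recall Baturov's theorem, quoted in the paragraph preceding Lemma~\ref{lem:onetooneimageofstationary}: for a Lindel\"of $\Sigma$-space $X$, every subspace $Z$ of $C_p(X)$ satisfies $e(Z)=l(Z)$. Applying this to the $Z$ just obtained yields $e(Z)=l(Z)$, contradicting $e(Z)\neq l(Z)$. Hence no such injection $f$ exists.

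There is really no serious obstacle here; the content has been isolated in Lemma~\ref{lem:onetooneimageofstationary}, and the only thing to check is the bookkeeping that the conclusion of that lemma is exactly what Baturov's theorem forbids. The one point worth a sentence of care is that Lemma~\ref{lem:onetooneimageofstationary} is stated for an arbitrary target space, so it applies verbatim when the target is $C_p(X)$, and that Baturov's theorem applies to \emph{every} subspace of $C_p(X)$, in particular to the subspace $Z$ produced by the lemma.
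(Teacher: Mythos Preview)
Your proof is correct and is essentially identical to the paper's own argument: the paper simply notes Baturov's theorem that $e(Z)=l(Z)$ for every $Z\subset C_p(X)$ and then says ``apply Lemma~\ref{lem:onetooneimageofstationary}.'' You have spelled out the contradiction in slightly more detail, but the route is the same.
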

\begin{proof}
By Baturov's theorem $l(Z) = e(Z)$ for every $Z\subset C_p(X)$.
Now apply Lemma \ref{lem:onetooneimageofstationary}.
\end{proof}

\par\bigskip\noindent
\begin{thm}\label{thm:maincriterion}
Let $L$ be a GO-space and $X$ a Lindel${\ddot o}$f $\Sigma$-space. If $C_p(L,\{0,1\})$ admits a continuous injection
into $C_p(X)$ then
\begin{enumerate}
	\item $cL\setminus L$ does not contain a subspace homeomorphic to a stationary subset of an uncountable
	regular ordinal; and
	\item $cL\setminus L$ is hereditarily paracompact.
\end{enumerate}
\end{thm}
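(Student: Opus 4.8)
The plan is to establish (1) directly by stacking the tools already in place, and then to deduce (2) from (1) via the classical characterization of paracompactness in ordered spaces.

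For (1): Theorem~\ref{thm:remaindertoCp} realizes $cL\setminus L$ as a closed subspace of $C_p(L,\{0,1\})$, and by hypothesis there is a continuous injection of $C_p(L,\{0,1\})$ into $C_p(X)$. Composing these and restricting to the copy of $cL\setminus L$ produces a continuous injection $g\colon cL\setminus L\to C_p(X)$. Now suppose, toward a contradiction, that $cL\setminus L$ contained a subspace $S'$ that is homeomorphic, say via $h$, to a stationary subset $S$ of a regular uncountable ordinal. Then $g\circ h\colon S\to C_p(X)$ is a continuous injection of $S$ into $C_p(X)$; since $X$ is a Lindel\"of $\Sigma$-space, this contradicts Lemma~\ref{lem:nostationaryinCpcompact}. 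Hence no such $S'$ exists, which is precisely statement (1).

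For (2): I would first note that $cL\setminus L$, being a subspace of the linearly ordered space $cL$, is a GO-space, and the same is true of each of its subspaces. Then I would invoke the Engelking--Lutzer theorem, that a GO-space is paracompact if and only if it contains no \emph{closed} subspace homeomorphic to a stationary subset of a regular uncountable cardinal. Apply this to an arbitrary subspace $Y\subseteq cL\setminus L$: were $Y$ not paracompact, $Y$ would contain a closed-in-$Y$ subspace $Z$ homeomorphic to a stationary subset of a regular uncountable cardinal, and then $Z$ would be a subspace of $cL\setminus L$ of the kind forbidden by (1) --- a contradiction. Thus every subspace of $cL\setminus L$ is paracompact, i.e.\ $cL\setminus L$ is hereditarily paracompact.

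I do not expect a serious obstacle here: once Theorem~\ref{thm:remaindertoCp}, Lemma~\ref{lem:nostationaryinCpcompact}, and the Engelking--Lutzer characterization are available, the whole argument is formal. The one point deserving a moment's care is the reduction in (2) from ``$cL\setminus L$ has no subspace homeomorphic to a stationary set'' to ``$cL\setminus L$ is hereditarily paracompact'', where one has to remember that a non-paracompact GO-space must already contain a closed copy of such a stationary set, and that this closed copy is then automatically a (not necessarily closed) subspace of the whole remainder, so that (1) applies to it.
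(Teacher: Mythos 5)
Your proof is correct and follows essentially the same route as the paper: Theorem~\ref{thm:remaindertoCp} plus Lemma~\ref{lem:nostationaryinCpcompact} give (1), and the Engelking--Lutzer characterization of paracompactness in GO-spaces converts (1) into (2). You even state the Engelking--Lutzer criterion with the correct negation (``paracompact iff it contains \emph{no} closed copy of a stationary set''), which the paper's wording accidentally drops.
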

\begin{proof}
Theorem \ref{thm:remaindertoCp} and Lemma \ref{lem:nostationaryinCpcompact} imply (1).

To show (2), we will use a classical  theorem of Engelking and Lutzer \cite{LUT} stating
that a GO-space is paracompact iff it contains a closed subspace homeomorphic to a stationary
subset of a regular uncountable ordinal. This theorem implies that a GO-space is hereditarily paracompact iff
it does not contains a subspace homeomorphic to a stationary subset of an uncountable regular ordinal. This criterion and (1)
imply (2).
\end{proof}

\par\bigskip
Observe that if $X$ is a GO-space and has uncountable extent, then $C_p(X)$ does not admit a continuous injection
into $C_p$ over a compactum simply because $C_p(X)$ has a subspace homeomorphic to $\{0,1\}^{\omega_1}$
while $C_p$ over a compactum cannot have such a subspace. This observation and Theorem \ref{thm:maincriterion} set quite strong requirements
on the topology of a GO-space $X$ whose function space admits a continuous injection into the function space over a compactum.

As we mentioned in the introduction, Theorem \ref{thm:maincriterion} is similar to a particular case of an earlier result of the author
that if $C_p(X,\{0,1\})$ continuously injects into $C_p(\tau,\{0,1\})$, where $X$ is a subspace of an ordinal and $\tau$ is some (other) ordinal,
then $\overline X\setminus X$ is hereditarily paracompact. Since $\tau$ need not be isolated, this statement does not
follow from Theorem \ref{thm:maincriterion}. To make the desired reduction for $\tau$ of uncountable cofinality we next  construct a continuous bijection of $C_p(\tau,\{0,1\})$
onto a subgroup of $C_p(\tau+1, \{0,1\})$, which is, in addition, a group isomorphism. We start with the following definition.
\par\bigskip\noindent
\begin{defin}\label{defin:fdeterminingsequence} Let $\tau$ be an ordinal of uncountable cofinality.
For each $f\in C_p(\tau, \{0,1\})$ we say that
$\langle i, \langle i_1,...,i_n\rangle\rangle$ is an $f$-determining sequence if the following conditions are met:
\begin{enumerate}
	\item $i_1=0$,
	\item $f(0)=i$,
	\item If $0<k\leq n$, then  $i_k = \min\{\alpha<\tau: i_{k-1} <\tau, f(\alpha)\not = f(i_{k-1})\}$,
	\item $f$ is constant on $[i_n,\tau)$.
\end{enumerate}
\end{defin}
\par\bigskip
Note that due to continuity of $f$, the ordinals $i_1,...,i_n$ are isolated.
\par\bigskip\noindent
\begin{lem}\label{lem:existenceuniquenessoffsequence}
Let $\tau$ be an ordinal of uncountable cofinality and $f\in C_p(\tau, \{0,1\})$.
Then an $f$-determining sequence exists and  is unique.
\end{lem}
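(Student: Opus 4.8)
The plan is to build the sequence greedily starting from $i_1=0$, letting $n$ be the first stage at which $f$ becomes constant on the relevant tail of $\tau$, and then to check that the resulting finite tuple satisfies Definition \ref{defin:fdeterminingsequence} and is the only tuple that does. The uncountable cofinality of $\tau$ enters precisely in showing that the greedy process terminates, and that is the step I expect to carry the weight; everything else is verification against the four defining clauses.

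For existence I would proceed as follows. Put $i=f(0)$ and $i_1=0$, so that (1) and (2) hold by fiat. Recursively, given $i_k<\tau$: if $f$ is constant on $[i_k,\tau)$, stop and declare $n=k$, which is (4); otherwise the set $\{\alpha<\tau:\ i_k<\alpha,\ f(\alpha)\ne f(i_k)\}$ is nonempty, and I let $i_{k+1}$ be its least element, which is exactly the ordinal that (3) prescribes and which satisfies $i_{k+1}>i_k$. The crux is that this recursion must halt. If it did not, it would yield a strictly increasing $\omega$-sequence $i_1<i_2<\cdots$ below $\tau$ with $f(i_k)\ne f(i_{k+1})$ for every $k$; since $\operatorname{cf}(\tau)>\omega$, the supremum $\lambda=\sup_k i_k$ is a limit ordinal with $\lambda<\tau$, and continuity of $f$ at $\lambda$ (using that $\{f(\lambda)\}$ is clopen in $\{0,1\}$) would give some $\beta<\lambda$ with $f(\gamma)=f(\lambda)$ for all $\gamma\in(\beta,\lambda]$ --- but this interval contains $i_k$ and $i_{k+1}$ for all sufficiently large $k$, forcing $f(i_k)=f(i_{k+1})$, a contradiction. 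Hence the greedy construction stops at some finite $n$, and the tuple it produces is an $f$-determining sequence.

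For uniqueness I would argue that every clause of Definition \ref{defin:fdeterminingsequence} is forced. If $\langle i,\langle i_1,\dots,i_n\rangle\rangle$ and $\langle i',\langle i'_1,\dots,i'_m\rangle\rangle$ are both $f$-determining, then $i=f(0)=i'$ by (2) and $i_1=0=i'_1$ by (1); since (3) pins down $i_k$ (for $k\ge 2$) purely in terms of $i_{k-1}$, a straightforward induction gives $i_k=i'_k$ for all $k\le\min\{n,m\}$. Finally the lengths must agree: if, say, $n<m$, then (4) for the shorter tuple makes $f$ constant on $[i_n,\tau)$, while (3) for the longer tuple at the index $n+1\le m$ supplies an ordinal $i'_{n+1}$ with $i_n=i'_n<i'_{n+1}<\tau$ and $f(i'_{n+1})\ne f(i'_n)=f(i_n)$, contradicting that constancy. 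Therefore $n=m$ and the two tuples are identical.

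The only point requiring a real idea is the halting step above --- equivalently, the standard fact that a continuous $\{0,1\}$-valued function on an ordinal of uncountable cofinality cannot have an $\omega$-sequence of value-changes accumulating inside $\tau$, which is also the reason every such $f$ is eventually constant. The remaining verifications I would keep brief.
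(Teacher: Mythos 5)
Your proof is correct, and in fact more complete than the paper's own argument, which disposes of the lemma in three sentences: it asserts (from uncountable cofinality) that there is a smallest $\alpha$ with $f$ constant on $[\alpha,\tau)$, then uses that $[0,\alpha]$ is a zero-dimensional compactum to extract a finite partition into convex clopen sets on which $f$ is constant, and takes the left endpoints of the pieces as $i_1,\dots,i_{n-1}$ and $\alpha$ as $i_n$; uniqueness is left implicit. You instead run the greedy recursion dictated by clause (3) directly and prove termination by hand: an infinite run would produce a strictly increasing $\omega$-sequence of value changes whose supremum $\lambda$ lies below $\tau$ because $\operatorname{cf}(\tau)>\omega$, and continuity of $f$ at $\lambda$ into the discrete space $\{0,1\}$ kills this. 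That single argument simultaneously yields the eventual constancy that the paper takes for granted and the finiteness that the paper gets from compactness of $[0,\alpha]$, so your route is self-contained where the paper leans on two standard facts. You also supply the uniqueness argument explicitly (induction on the index plus the observation that clauses (3) and (4) are incompatible if the lengths differ), which the paper omits entirely. The only editorial point worth noting is that clause (3) of Definition \ref{defin:fdeterminingsequence} literally reads ``$i_{k-1}<\tau$'' where it clearly intends ``$i_{k-1}<\alpha$''; you silently adopted the intended reading, which is the right call, but it would be worth flagging.
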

\begin{proof}
Since $\tau$ has uncountable cofinality, there exists the smallest $\alpha$ such
that $f$ is constant on  $[\alpha,\tau)$. Since $[0,\alpha]$ is a zero-dimensional compactum, there exists a finite partition of $[0,\alpha]$
by convex sets on which $f$ is constant. The left-endpoints of the partition
serve as $i_1,...,i_{n-1}$ and $\alpha$ as $i_n$.
\end{proof}
\par\bigskip\noindent
We next define a correspondence from $C_p(\tau, \{0,1\})$ to $C_p(\tau+1, \{0,1\})$ that will be shown
to be a desired  map.

\par\bigskip\noindent
\begin{defin}\label{defin:injection}
Let $\tau$ be an ordinal of uncountable cofinality, $f\in C_p(\tau, \{0,1\})$, and 
$\langle i, \langle i_1,...,i_n\rangle\rangle$ the $f$-determining sequence.
Then $\phi(f):\tau+1\to \{0,1\}$ is defined as follows:
\begin{description}
	\item[\rm If $i=0$, then put]
	\[ \phi(f)(x)=   \left\{
              		\begin{array}{lll}
                   0 & if & x\not = i_2,...,i_n \\
                   1 & if & x=i_2,...,i_n
             		  \end{array}
            			\right. 
\]
	\item[\rm If $i=1$, then put]
	\[ \phi(f)(x)=   \left\{
              		\begin{array}{lll}
                   0 & if & x\not = i_1,...,i_n \\
                   1 & if & x=i_1,...,i_n
             		  \end{array}
            			\right. 
\]
\end{description}
\end{defin}
\par\bigskip
Note that $\phi(f)$ is a continuous function from $\tau+1$ to $\{0,1\}$ because $\{i_1,...,i_n\}$ in the $f$-determining sequence are isolated ordinals. By Lemma \ref{lem:existenceuniquenessoffsequence},
$\phi(f)$ is well defined for each $f$. Therefore, $\phi$ is a well-defined map from $C_p(\tau, \{0,1\})$ into $C_p(\tau+1, \{0,1\})$.
\par\bigskip\noindent
\begin{lem}\label{lem:CptautoCptauplus1} Let $\tau$ be an ordinal of uncountable cofinality. Then
$\phi:C_p(\tau, \{0,1\})\to C_p(\tau+1, \{0,1\})$ is a continuous injection.
\end{lem}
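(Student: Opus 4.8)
The two things to establish are injectivity and continuity; I expect injectivity to be the real work, since continuity will follow from a standard "the value at a point depends on finitely many coordinates" argument once the combinatorics of the $f$-determining sequence are understood.

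For injectivity, suppose $\phi(f)=\phi(g)$. The key observation is that the $f$-determining sequence can be recovered from $\phi(f)$ together with the value $f(0)$, and moreover $f(0)$ itself is encoded: by Definition~\ref{defin:injection}, if $i=f(0)=1$ then $\phi(f)(0)=\phi(f)(i_1)=1$ (since $i_1=0$), whereas if $i=f(0)=0$ then $\phi(f)(0)=0$ because $0=i_1\notin\{i_2,\dots,i_n\}$ (the $i_k$ are strictly increasing, so $0$ is not among $i_2,\dots,i_n$). Hence $\phi(f)(0)$ determines $i=f(0)$. Once $i$ is known, the support of $\phi(f)$ is exactly $\{i_2,\dots,i_n\}$ (case $i=0$) or $\{i_1,\dots,i_n\}=\{0,i_2,\dots,i_n\}$ (case $i=1$); either way the set $\{i_1,\dots,i_n\}$ is determined by $\phi(f)$. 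So from $\phi(f)=\phi(g)$ we get $f(0)=g(0)$ and the same ordered list of breakpoints $i_1<\dots<i_n$. Then I reconstruct $f$ directly: $f$ is constant on each interval $[i_{k-1},i_k)$ for $1\le k\le n$ and constant on $[i_n,\tau)$, and by condition~(3) of Definition~\ref{defin:fdeterminingsequence} the value alternates with each successive $i_k$, starting from $f(i_1)=f(0)=i$. Thus $f$ is completely determined by $\big(i,\langle i_1,\dots,i_n\rangle\big)$, which equals that of $g$; hence $f=g$.

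For continuity, recall $C_p(\tau,\{0,1\})$ carries the topology of pointwise convergence, so it suffices to show that for each fixed $x\in\tau+1$ the map $f\mapsto\phi(f)(x)$ is continuous. Fix $f_0$ with determining sequence $\big(i,\langle i_1,\dots,i_n\rangle\big)$. I claim there is a finite set $F\subset\tau$ (depending on $f_0$ and $x$) such that any $g$ agreeing with $f_0$ on $F$ has the same determining data relevant to the value at $x$, so $\phi(g)(x)=\phi(f_0)(x)$. The natural candidate is $F=\{0,i_1,\dots,i_n\}$ together with, for each breakpoint $i_k$, its immediate predecessor $i_k-1$ (these exist since the $i_k$ are isolated), and similarly if $x$ lies below $i_n$, a predecessor of the relevant breakpoint near $x$; the point is that agreeing on such an $F$ pins down which breakpoints occur up to the location of $x$, hence pins down $\phi(g)(x)$. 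Because $\{0,1\}$ is discrete, the basic open set of $g$ agreeing with $f_0$ on the finite set $F$ is a neighborhood of $f_0$ in $C_p(\tau,\{0,1\})$, giving continuity at $f_0$.

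The main obstacle is the continuity step, specifically verifying that a correctly chosen finite $F$ really does control $\phi(g)(x)$ for all $g$ near $f_0$ — one has to be careful that a nearby $g$ cannot introduce a new early breakpoint (making $g$ non-constant on some $[i_{k-1},i_k)$) or destroy one, in a way that shifts the support of $\phi(g)$ across $x$. Including the immediate predecessors $i_k-1$ of the breakpoints in $F$ is exactly what forbids a new breakpoint from sneaking in before a given $i_k$ while still forcing the old ones to persist; once this bookkeeping is done the argument is routine. (Uniqueness of the determining sequence, Lemma~\ref{lem:existenceuniquenessoffsequence}, is what makes the reconstruction in the injectivity argument unambiguous, so it is used there as well.)
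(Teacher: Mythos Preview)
Your injectivity argument is correct and is essentially the contrapositive of the paper's: you recover the determining sequence from $\phi(f)$ and then $f$ from its determining sequence, while the paper starts from $f\neq g$ and exhibits a coordinate where $\phi(f)$ and $\phi(g)$ disagree. Either direction is fine.

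The continuity plan, however, has a genuine gap. Your proposed finite set $F$ (the breakpoints $i_k$ together with their predecessors $i_k-1$) does \emph{not} ``forbid a new breakpoint from sneaking in before a given $i_k$''. Agreement at $i_{k-1}$ and at $i_k-1$ only forces $g$ to take the same value at the two endpoints of $[i_{k-1},i_k-1]$; nothing prevents $g$ from flipping an even number of times inside that interval. So $F$ does not pin down the breakpoint set of $g$. Worse, if $x$ is isolated with $0<x<\tau$ and $x$ is \emph{not} among the $i_k$ of $f_0$ (so $\phi(f_0)(x)=0$), your $F$ need not contain $x$ or $x-1$ at all, and a $g$ agreeing with $f_0$ on $F$ can have a breakpoint at $x$, giving $\phi(g)(x)=1\neq\phi(f_0)(x)$.

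The fix is to notice that $\phi(f)(x)$ is much more local than your plan assumes: for isolated $x$ with $0<x<\tau$ one has $\phi(f)(x)=1$ if and only if $f(x)\neq f(x-1)$; for limit $x<\tau$ and for $x=\tau$ one always has $\phi(f)(x)=0$; and $\phi(f)(0)=f(0)$. Thus the preimage of each subbasic open set $\{h:h(x)=i_x\}$ is one of $C_p(\tau,\{0,1\})$, $\emptyset$, $\{f:f(0)=i_x\}$, $\{f:f(x)=f(x-1)\}$, or $\{f:f(x)\neq f(x-1)\}$, all visibly open. This is exactly the paper's argument. In your language, the correct $F$ is just $\{x,x-1\}$, $\{0\}$, or $\emptyset$, and the global breakpoint data of $f_0$ is irrelevant. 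So continuity is indeed the easy half, but for a different reason than you anticipated: not because the whole determining sequence can be frozen by finitely many coordinates (it cannot), but because $\phi(f)(x)$ depends only on $f(x)$ and $f(x-1)$.
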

\begin{proof} The conclusion follows from the next two claims.
\par\bigskip\noindent
{\it Claim 1.} {\it
The map $\phi$ is  one-to-one. 
}
\par\smallskip\noindent
To show that $\phi$ is one-to-one, fix distinct $f$ and $g$ in  $C_p(\tau, \{0,1\})$.
Let $\langle i_f,\langle i_1^f,...,i_n^f\rangle\rangle$
and $\langle i_g,\langle i_1^g,...,i_m^g\rangle\rangle$ be the $f$ and $g$-determining sequences.
If $\langle i_1^f,...,i_n^f\rangle=\langle i_1^g,...,i_m^g\rangle$, then $f$ and $g$ are constant on the same
clopen intervals. In this case, $f\not = g$ implies that  $f(0)\not = g(0)$. That is, $i_f\not = g_f$. We may assume that $i_f=0$.
By the definition of $\phi$, we have $\phi(f)(0)=0$ and $\phi(g)(0)=1$.
Now assume that $\langle i_1^f,...,i_n^f\rangle\not = \langle i_1^g,...,i_m^g\rangle$. Since the elements of each sequence are in
increasing order, there exists
an element in one sequence which is not an element of the other. Since $i_0^f=i_0^g=0$, we may assume that
that $i_3^f$ is such an element. By the definition of $\phi$, we have $\phi(f)(i_3^f)=1$ and $\phi(g)(i_3^f)=0$.

\par\bigskip\noindent
{\it Claim 2.} {\it $\phi$ is continuous.}
\par\medskip\noindent
To prove the claim, fix an open $U$ in $C_p(\tau+1,\{0,1\})$.
We need to show that $\phi^{-1}(U)$ is open. We can assume that $U$ is an element of the standard
subbase. That is, there exist $x\in \tau+1$ and $i_x\in \{0,1\}$ such that
$U = \{g\in C_p(\tau+1,\{0,1\}):g(x)=i_x\}$. We will proceed  by exhausting all possibilities on the values of $x$ and $i_x$.
\begin{description}
	\item[\rm Case ($x=\tau, i_x=0$)]	In this case $U$ contains all functions that are eventually $0$. Since $\phi(f)$ is eventually $0$ for every
	$f\in C_p(\tau, \{0,1\})$, we conclude that $\phi^{-1}(U) = C_p(\tau, \{0,1\})$.
	\item[\rm Case ($x=\tau, i_x=1$)] By the argument of the previous case, $\phi^{-1}(U) = \emptyset$.
	\item[\rm Case ($x=0, i_x=0$)]	Since $\phi(f)(0)=0$ if and only if $f(0) = 0$, we conclude that
	$\phi^{-1}(U) = \{f: f(0)=0\}$.
	\item[\rm Case ($x=0, i_x=1$)] By the argument of the previous case, $\phi^{-1}(U) = \{f: f(0)=1\}$.
	\item[\rm Case ($x\ is\ limit,\ x<\tau, i_x=0$)]	Due to continuity, $f\in C_p(\tau, \{0,1\})$ cannot change the value
	at a limit ordinal. Applying the definition of $\phi(f)$, we have $\phi(f)(x) = 0$.  Therefore,  $\phi^{-1}(U) = C_p(\tau, \{0,1\})$.
	\item[\rm Case ($x\ is\ limit,\ x<\tau, i_x=1$)] By the argument of the previous case, $\phi^{-1}(U) = \emptyset$.
	\item[\rm Case ($x\ is\ isolated,\ 0<x<\tau, i_x=0$)]	If $f\in\phi^{-1}(U)$, then $f$ does not change its value at $x$. This means
	that $f(x)=f(x-1)$. Therefore, $\phi^{-1}(U) = \{f: f(x)=f(x-1)\}$.
	\item[\rm Case ($x\ is\ isolated,\ 0<x<\tau, i_x=1$)] By the argument of the previous case, $\phi^{-1}(U) = \{f: f(x)\not =f(x-1)\}$.
\end{description}
\end{proof}
\par\bigskip\noindent
\begin{lem}\label{lem:phiisomorphism}
$\phi$ is an isomorphism of $C_p(\tau, \{0,1\})$ with its image.
\end{lem}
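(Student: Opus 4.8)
The plan is to verify that $\phi$ is a group homomorphism and then combine this with Lemma~\ref{lem:CptautoCptauplus1}. Here $\{0,1\}$ is regarded as the two-element group $\mathbb Z_2$ (addition modulo $2$), and $C_p(Z,\{0,1\})$ carries the induced coordinatewise operation. Since Lemma~\ref{lem:CptautoCptauplus1} already shows that $\phi$ is a continuous injection, it will suffice to prove $\phi(f+g)=\phi(f)+\phi(g)$ for all $f,g\in C_p(\tau,\{0,1\})$: a bijective homomorphism onto its image is automatically a group isomorphism, and the image $\phi(C_p(\tau,\{0,1\}))$, being a homomorphic image of a group, is then a subgroup of $C_p(\tau+1,\{0,1\})$.

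The first step I would take is to record an explicit pointwise formula for $\phi$, which is in effect already contained in the case analysis in the proof of Lemma~\ref{lem:CptautoCptauplus1}. Namely, for every $f\in C_p(\tau,\{0,1\})$ and every $x\in\tau+1$,
\[ \phi(f)(x)=\begin{cases} f(0), & x=0,\\ f(x)+f(x-1), & x\ \text{isolated},\ 0<x<\tau,\\ 0, & x\ \text{a limit ordinal},\ 0<x\le\tau. \end{cases} \]
The value at $x=0$ comes straight from Definition~\ref{defin:injection} (in both cases $0=i_1$, so $\phi(f)(0)=i=f(0)$); the bottom line combines the two ``limit'' cases in the proof of Lemma~\ref{lem:CptautoCptauplus1}, using that $\tau$ is itself a limit ordinal (it has uncountable cofinality) and that every $i_k$ is isolated; and the middle line is precisely the statement that the jump points $i_2,\dots,i_n$ of the $f$-determining sequence are exactly the isolated $x\in(0,\tau)$ with $f(x)\ne f(x-1)$. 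That last point follows from Definition~\ref{defin:fdeterminingsequence}(3)--(4): $f$ is constant on each interval $[i_{k-1},i_k)$ and on $[i_n,\tau)$, so $f$ can change value at an isolated point only at some $i_k$, and conversely $f(i_k)\ne f(i_{k-1})=f(i_k-1)$ at each $i_k$. This is the same computation that produces the isolated-$x$ cases $\phi^{-1}(U)=\{f:f(x)=f(x-1)\}$ and $\phi^{-1}(U)=\{f:f(x)\ne f(x-1)\}$ in the proof of Lemma~\ref{lem:CptautoCptauplus1}.

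With this formula in hand, the homomorphism property is checked coordinatewise in the same three cases; I would first note that $f+g$ again lies in $C_p(\tau,\{0,1\})$, so the formula applies to it as well. At $x=0$ the identity reads $(f+g)(0)=f(0)+g(0)$; at a limit ordinal $x$ with $0<x\le\tau$ it reads $0=0+0$; and at an isolated $x$ with $0<x<\tau$ it reads
\[ (f+g)(x)+(f+g)(x-1)=\bigl(f(x)+f(x-1)\bigr)+\bigl(g(x)+g(x-1)\bigr), \]
which holds by commutativity and associativity of addition in $\mathbb Z_2$. Thus $\phi$ is a homomorphism, and together with Lemma~\ref{lem:CptautoCptauplus1} this yields that $\phi$ is a continuous group isomorphism of $C_p(\tau,\{0,1\})$ onto the subgroup $\phi(C_p(\tau,\{0,1\}))$ of $C_p(\tau+1,\{0,1\})$. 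I do not expect a genuine obstacle: once the ``discrete derivative'' description of $\phi$ is isolated, the rest is trivial arithmetic modulo $2$. The one place deserving a line of care is the identification in the middle case, but that is immediate from Definition~\ref{defin:fdeterminingsequence} together with the proof of Lemma~\ref{lem:CptautoCptauplus1}.
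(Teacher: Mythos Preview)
Your argument is correct. The route, however, is genuinely different from the paper's. The paper proves $(*)$ $\phi(f+g)(x)=\phi(f)(x)+\phi(g)(x)$ by an induction on $x$ that splits into cases according to whether $x$ coincides with some $i_k^f$ and/or some $i_l^g$; each case is then settled by listing the possible quadruples $\langle f(x-1),g(x-1),f(x),g(x)\rangle$ and checking by hand that both sides agree. You instead first distill Definition~\ref{defin:injection} into the closed pointwise formula $\phi(f)(0)=f(0)$, $\phi(f)(x)=f(x)+f(x-1)$ at isolated $0<x<\tau$, and $\phi(f)(x)=0$ at limit $x\le\tau$ (the ``discrete derivative'' in $\mathbb Z_2$), and then additivity drops out from commutativity and associativity in $\mathbb Z_2$ with no case enumeration at all. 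What your approach buys is brevity and transparency: once the formula is recognized, the lemma is a one-line computation, and the induction in the paper becomes unnecessary. What the paper's approach buys is that it stays entirely inside the language of determining sequences, never needing to reinterpret $\phi$; but that is a stylistic rather than a mathematical gain. Your justification of the middle case of the formula (that the jump points $i_2,\dots,i_n$ are exactly the isolated $x$ with $f(x)\ne f(x-1)$) is the only step requiring care, and you handle it correctly via Definition~\ref{defin:fdeterminingsequence}(3)--(4).
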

\begin{proof}
Fix arbitrary $f,g\in C_p(\tau, \{0,1\})$.
Since $\phi$ is one-to-one, we only need to show that 
$\phi(f+g)(x) = (\phi(f)+\phi(g))(x)$ for each $x\in \tau+1$. This is equivalent to showing
the following equality for each $x\in \tau+1$
\par\smallskip\noindent
(*) $\phi(f+g)(x) = \phi(f)(x)+\phi(g)(x).$
\par\smallskip\noindent
We will prove this equality inductively on the value of $x$. For this let $\langle i_f,\langle i_1^f,...,i_n^f\rangle\rangle$
and $\langle i_g,\langle i_1^g,...,i_m^g\rangle\rangle$ be the $f$ and $g$-determining sequences.
\par\bigskip\noindent
{\it Step $x=0$.} 
\begin{description}
	\item[\rm Case ($f(0)\not=g(0)$) ] Then $\{f(0), g(0)\} = \{0,1\}$. Hence, $(f+g)(0) = f(0)+g(0) = 1$.
	By the definition of $\phi$, we have $\phi (f+g)(0) = 1$. Thus, the left side of (*) is $1$.
	
	Since $\{f(0), g(0)\} = \{0,1\}$ we obtain that $\{\phi(f)(0), \phi(g)(0)\} = \{0,1\}$.
	Therefore, $\phi(f)(0)+\phi(g)(0) = 1$, that is, the right side of (*) is $1$ as well.
	\item[\rm Case ($f(0)=g(0)$) ] Then $(f+g)(0)=f(0)+g(0)=0$. Therefore, the left side of (*) is $\phi(f+g)(0)=0$.
	
	Since $f(0) = g(0)$, we conclude that $\phi(f)(0)=\phi(g)(0)$. Therefore, the right side of (*) is $\phi(f)(0)+\phi(g)(0)=0$.
\end{description}
\par\bigskip\noindent
{\it Assumption.} Assume that (*) holds for all $x\in [0,\alpha)$, where  $\alpha\leq \tau$.
\par\bigskip\noindent
{\it Step $x=\alpha>0$.} Let $k\in \{1,...,n\}$ be the largest such that $\alpha\geq i_k^f$. Let $l\in \{1,...,m\}$
be the largest such that $\alpha\geq i_l^g$.
\begin{description}
	\item[\rm Case ($\alpha=i_k^f=i^g_l$)] Then $f$ and $g$ change values at $\alpha$ and hence $\alpha$ is isolated.
	Therefore, we have $f(\alpha)\not = f(\alpha-1)$ and $g(\alpha)\not = g(\alpha-1)$. Then the possibilities for
	$\langle f(\alpha-1), g(\alpha-1), f(\alpha), g(\alpha)\rangle$ are
	$\langle 1,1,0,0\rangle$, $\langle 0,0,1,1 \rangle$, $\langle 1,0, 0,1\rangle$, and $\langle 0,1,1,0 \rangle$.
	In the first two of  cases, we have $(f+g)(\alpha-1)=(f+g)(\alpha)=0$. In the other two cases, we have
	$(f+g)(\alpha-1)=(f+g)(\alpha)=1$. In all cases we have $(f+g)(\alpha-1)=(f+g)(\alpha)$. That is,
	$(f+g)$ does not change its value at $\alpha$. Therefore, the left side of (*) is  $\phi(f+g)(\alpha) = 0$.
	
	To evaluate the right side of (*), observe that the case's condition implies that
	$\phi(f)(\alpha)=\phi(g)(\alpha)=1$. Therefore, the right side of (*) is $0$ too.	
	
	\item[\rm Case ($\alpha=i_k^f\ and\ i_k^f\not =i^g_l$)] Then $f$ changes its value at $\alpha$ while $g$ does not.
	Since $f$ changes its value at $\alpha$, $\alpha$ is isolated. The possibilities for $\langle f(\alpha-1), f(\alpha)\rangle$
	are $\langle 0,1\rangle$ and $\langle 1,0\rangle$. The possibilities for $\langle g(\alpha-1), g(\alpha)\rangle$
	are $\langle 0,0\rangle$ and $\langle 1,1\rangle$. Therefore,
	the possibilities for $\langle f(\alpha-1)+g(\alpha-1), f(\alpha)+g(\alpha)\rangle$
	are $\langle 0,1\rangle$ and $\langle 1,0\rangle$. That is, $(f+g)$ changes its value at $\alpha$. Therefore,
	the left side of (*) is $\phi(f+g)(\alpha)= 1$.
	
	To evaluate the right side of (*), observe  that the case's conditions imply  that $f$ changes its value at $\alpha$ while $g$ does not.
	Therefore, $\phi(f)(\alpha) =1$ and $\phi(g)(\alpha)=0$. Therefore, the right side of (*) is
	$\phi(f)(\alpha)+\phi(g)(\alpha)=1$.
	\item[\rm Case ($\alpha=i_l^g\ and\ i_k^f\not =i^g_l$)] This case is analogous to the previous case.
	\item[\rm Case ($\alpha\not=i_k^f\ and\ \alpha\not =i^g_l$)] In this case, there exists $\beta<\alpha$ such that
	$f$ and $g$ are constant on $[\beta, \alpha]$. Then $f+g$ is constant on $[\beta, \alpha]$.
	Therefore, the left side of (*) is $\phi(f+g)(\alpha) = 0$.
	
	Let us evaluate the right side of (*). The case's conditions imply that $\phi(f)(\alpha)=0$
	and $\phi(g)(\alpha)=0$. Therefore, the right side of (*) is $\phi(f)(\alpha)+\phi(g)(\alpha)=0$. 
\end{description}
\par\bigskip\noindent
\end{proof}
\par\bigskip
We can sumarize statements \ref{lem:existenceuniquenessoffsequence}-\ref{lem:phiisomorphism}
as follows:
\par\bigskip\noindent
\begin{thm}\label{thm:isomorphism}
Let $\tau$ be an ordinal of uncontable cofinality. Then there exists a continuous one-to-one map of $C_p(\tau, \{0,1\})$
onto a subgroup of $C_p(\tau + 1, \{0,1\})$, which is, a group isomorphism.
\end{thm}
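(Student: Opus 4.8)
The plan is to take the map $\phi\colon C_p(\tau,\{0,1\})\to C_p(\tau+1,\{0,1\})$ from Definition \ref{defin:injection} and verify that it has all the asserted properties by invoking the three preceding lemmas in turn. First I would observe that $\phi$ is well defined: by Lemma \ref{lem:existenceuniquenessoffsequence} every $f\in C_p(\tau,\{0,1\})$ possesses a unique $f$-determining sequence $\langle i,\langle i_1,\dots,i_n\rangle\rangle$, so the formula of Definition \ref{defin:injection} assigns to $f$ an unambiguous function $\phi(f)\colon\tau+1\to\{0,1\}$; and since the ordinals $i_1,\dots,i_n$ are isolated, $\phi(f)$ is continuous, so $\phi$ indeed maps into $C_p(\tau+1,\{0,1\})$.

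Next I would apply Lemma \ref{lem:CptautoCptauplus1} to conclude that $\phi$ is a continuous injection, and Lemma \ref{lem:phiisomorphism} to conclude that $\phi$ is a group homomorphism with respect to coordinatewise addition modulo $2$. Putting these together: the image $G=\phi\bigl(C_p(\tau,\{0,1\})\bigr)$ is the homomorphic image of a group under a group homomorphism, hence a subgroup of $C_p(\tau+1,\{0,1\})$; and $\phi$, regarded as a map onto $G$ equipped with the subspace topology, is then a continuous bijective group homomorphism, i.e.\ a continuous one-to-one map of $C_p(\tau,\{0,1\})$ onto the subgroup $G$ which is simultaneously a group isomorphism. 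This is exactly the statement of the theorem.

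The entire mathematical substance sits in the earlier lemmas, so at this stage the proof is pure assembly and presents no real obstacle. If one wishes to point to the genuinely delicate part of the overall argument, it is the verification of Lemma \ref{lem:phiisomorphism}, where the additivity identity $\phi(f+g)(x)=\phi(f)(x)+\phi(g)(x)$ must be established by induction along $\tau+1$ with a careful case split at each successor-type step $x=\alpha$, according to whether $\alpha$ is a term of the $f$-determining sequence, of the $g$-determining sequence, of both, or of neither; the point is that the term set of the determining sequence of $f+g$ is the symmetric difference of those of $f$ and $g$, and $\phi$ encodes precisely this term set. Once that lemma is granted, nothing further is needed here.
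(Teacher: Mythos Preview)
Your proposal is correct and mirrors the paper's own treatment exactly: the theorem is stated as a summary of Lemmas \ref{lem:existenceuniquenessoffsequence}--\ref{lem:phiisomorphism}, with no additional argument given. Your extra remarks about where the real work lies (the case analysis in Lemma \ref{lem:phiisomorphism}) are accurate and helpful, though not required for this assembly step.
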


\par\bigskip
Note that $C_p(\omega,\{0,1\})$ does not admit a continuous injection into $C_p(\omega+1, \{0,1\})$ since the latter
is countable while the former is uncountable. Therefore, the condition on cofinality of $\tau$ in our construction of $\phi$
is important.
\par\bigskip
Our results can be used to derive some earlier results of the author. Namely, in
\cite[Theorem 2.6]{BUZ3}, the author proved that  if $M$ is a metric space with at least two elements, $\tau$ is an ordinal, and $X$ is a subspace of an ordinal
such that $C_p(X, M)$ admits a continuous injection into $C_p(\tau, M)$, then $\overline X\setminus X$ is hereditarily
paracompact. The results of this paper can be used to derive the mentioned earlier result for the case when $M = \{0,1\}$. Indeed,
Let $X$ be  a subspace of an ordinal and let $C_p(X, \{0,1\})$
admit a continuous injection into $C_p(\tau, \{0,1\})$ for some ordinal $\tau$.
Let $\kappa$ be the smallest ordinal number such that $X\subset \kappa$. Clearly, $Cl_{\kappa+1}(X)$  is the Dedekind completion
of $X$. If $\tau$ is of countable cofinality, then $\tau$ is locally compact or compact, that is, a Lindel${\rm \ddot o}$f $\Sigma$-space. By Theorem \ref{thm:maincriterion}, the Dedekind remainder of $X$ is hereditarily paracompact. If $\tau$ has uncountable cofinality, then
$C_p(\tau, \{0,1\})$ admits a continuous injection into $C_p(\tau+1, \{0,1\})$,  and therefore, $C_p(X, \{0,1\})$ admits a continuous
injection into $C_p(\tau+1, \{0,1\})$. Now apply Theorem \ref{thm:maincriterion} to conclude that $Cl_{\kappa+1}(X)\setminus X$ is hereditarily
paracompact.
Of course it would be nice if were able to derive the most general version of the earlier result using our new approach. But for this, we need a
positive answer to the following question.

\par\bigskip\noindent
\begin{que}
Let $\tau$ be an ordinal of uncountable cofinality and $M$ a metric space containing at least two points.
Is it true that $C_p(\tau, M)$ admits a continuous injection into $C_p(\tau+1, M)$?
\end{que}
\par\bigskip\noindent
Next are  natural questions prompted by the properties of our $\phi$ defined in Definition \ref{defin:injection}.
\par\bigskip\noindent
\begin{que}
Let $X$ be a countably compact locally compact space. Is it true that $C_p(X)$ admits a continuous injection
into $C_p(Y)$ for some compactum $Y$.
\end{que}
\par\bigskip\noindent
\begin{que}
Let $\tau$ be an ordinal of uncountable cofinality and $G$ a topological group.
Is it true that $C_p(\tau, G)$ admits a continuous isomorphism onto a subgroup of $C_p(\tau +1, G)$?
\end{que}
\par\bigskip
In \cite{BUZ2}, the author showed that for a subspace $A$ of $\omega_1$ and a non-trivial metric space $M$, the existence of an embedding
of $C_p(A, M^\omega)$ into $C_p(\omega_1, M^\omega)$ is equivalent to the existence of an injection of $C_p(A, M^\omega)$ into 
$C_p(\omega_1, M^\omega)$.
The results of this paper show that this criterion cannot be extended beyond $\omega_1$. Indeed, by Lemma \ref{lem:CptautoCptauplus1}, 
$C_p(\omega_1, \{0,1\})$ admits a continuous injection into $C_p(\omega_1+1, \{0,1\})$. Then $C_p (\omega_1, \{0,1\})^\omega$ admits a continuous injection into $C_p(\omega_1+1, \{0,1\})^\omega$. Since $C_p(X,Y)^\omega$ is homeomorphic to $C_p(X, Y^\omega)$ (see \cite[Proposition 0.3.3]{ARH}), we conclude
that $C_p(\omega_1, \{0,1\}^\omega)$ admits a continuous injection into $C_p(\omega_1+1, \{0,1\}^\omega)$. However,
$C_p(\omega_1, \{0,1\}^\omega)$ does not embed into $C_p(\omega_1+1, \{0,1\}^\omega)$ since the latter has countable tightness
while the former does not. Nonetheless, we believe that the mentioned earlier result may have a chance to be extended to the class
of first-countable countably compact subspaces of ordinals.

\par\bigskip\noindent
\begin{que}
Let $X$ be a countably compact first-countable subspace of an ordinal and 
 $Z$ a subspace of $X$. Is it true that $C_p(Z)$ continuously injects into $C_p(X)$ iff
$C_p(Z)$ embeds into $C_p(X)$? Is it true that $C_p^\omega(Z)$ continuously injects into $C_p(X)^\omega$ iff
$C_p(Z)^\omega$ embeds into $C_p(X)$?
\end{que}

\par\bigskip\noindent
{\bf Acknowledgment.} {\it
The author would like to thank the referee for many helpful remarks, corrections, and suggestions.
}

\end{document}